\newenvironment{rcases}
  {\left.\begin{aligned}}
  {\end{aligned}\right\rbrace}
\numberwithin{equation}{section}
\newtheorem{thm}{Theorem}[section]
\newcommand{\RNum}[1]{\uppercase\expandafter{\romannumeral #1\relax}}
\begin{document}
\title{On the  initial value problem for the  electromagnetic wave equation in Friedmann-Robertson-Walker space-times}
\author{Walter Craig and Mikale Reddy
\\ \\
Department of Mathematics and Statistics, McMaster  University\\
Hamilton,  Ontario, L8S 4K1, Canada\\
reddymikale@gmail.com}
\date{\today}

\maketitle

\begin{abstract}
We solve the source free electromagnetic wave equation in Friedmann-Robertson-Walker space-times for curvature $K=0$ and $K=-1$. Deriving a solution expression in the form of spherical means we deduce and compare two properties of the Maxwell propagator namely, decay rates, as well as continuity through the space-time singularity to that of the scalar wave equation presented by Abbasi and Craig [1].
\end{abstract}

\section{Introduction}
 Friedmann-Robertson-Walker (FRW) metrics are Lorentzian metrics  which describe an expanding (or contracting) space-time satisfying the symmetries of spatial homogeneity and isotropy. In addition, they are metrics which possess a space-time singularity at time $t=0$. These metrics play a central role in general relativity and cosmology as they're the simplest models of the universe which contain a `big bang' singularity.\\\\
In a recent paper, Abbasi and Craig [1] derived three results about the scalar wave propagator for the Cauchy problem in these space-times. In this paper we derive and compare analogous results for the source free electromagnetic wave equation, of which is a vector wave equation, to that of the scalar wave equation presented in [1]. First, we derive decay rates of the solution with compactly supported initial data. Secondly, noting that all FRW spaces are conformally flat, Huygens' principle is obeyed for all cases considered as they are obeyed in the underlying product metrics considered (Helgason S.) [6].  Lastly, we show that for $0<t_0<t$ the limit of the wave propagator,
\[\lim_{t_0\to0^+}W(t_0,t)(f^\mu,g^\mu)=W(0,t)(f^\mu,g^\mu)\]
exists and gives rise to a well-defined solution for all $t>0$ depending upon the initial data $(f^\mu(x),g^\mu(x))$ emanating from the space-time singularity at $t=0$. Under the reflection $t\to -t$, the FRW metric gives a space-time metric for $t<0$ with a singular future at $t=0$. Due to the symmetry of the solution expressed in terms of spherical averages the same solution formulae hold. Thus, analogous to the results obtained by Abbasi and Craig [1] we have constructed solutions $A^\mu(t,x)$ of the electromagnetic wave equation in Friedmann-Robertson-Walker space-times which exist for all $-\infty<t<0$ and $0<t<+\infty$ where in conformal coordinates, are continuous through the singularity at  $t=0$, taking on specified data $A^\mu(0,\cdot)=(f^\mu(\cdot),g^\mu(\cdot))$.\\\\
In FRW space-time, the general metric takes the form
\begin{equation}
ds^2=-dt^2+a^2(t)d\sigma^2
\end{equation}
where $a^2(\tau)$ is  the scale factor and $d\sigma^2$ is the line element for each spatially homogeneous time slice. These correspond to Euclidean space $\mathbb{R}^3$, and hyperbolic space $\mathbb{H}^3$ for curvature $K=0$ and $K=-1$ respectively in the cases considered here. Under the coordinate transformation
\[\frac{dt}{d\tau}=a(t)\]
i.e.
\[\tau=\int\frac{dt}{a(t)}.\]
the metric (1.1) takes the form [1]
\begin{equation}
ds^2=a^2(\tau)(-d\tau^2+d\sigma^2).
\end{equation}
In  this form, it becomes evident that the metric is a conformal change of the underlying product metric. In addition, it can be  shown that under this coordinate transformation the FRW space-time is conformal to the upper half space $\mathbb R_+\times \mathbb R^3=\{(t,x):\tau>0\}$ with the Minkowski metric and time variable $\tau$. In terms of the transformed time variable $\tau$ the scale factor $a(\tau)$ takes the form (Ellis R., Hawking S.) [4]
\begin{align*}
\begin{rcases}
&a(\tau)=\tau^2, \qquad K=0 \\
&a(\tau)=\cosh(\tau)-1, \qquad K=-1\\
\end{rcases}
\end{align*}
where the conformal  time $\tau$ is related to the original time variable $t$ by
\begin{align*}
\begin{rcases}
&t=\frac{\tau^3}{3}, \qquad K=0 \\
&t=\sinh(\tau)-\tau, \qquad K=-1.\\
\end{rcases}
\end{align*}
\section{Conformal Invariance}
It is a well known result that Maxwell's equations are conformally invariant. Indeed consider the source free  action for the electromagnetic field tensor given by
\[S=\int\Big(-\frac{1}{4}F_{\mu\nu}F^{\mu\nu}\Big)\sqrt{-g}d^nx.\]
Under the conformal transformation $g(x)\to \lambda(x) g(x)=\tilde{g}(x)$, the electromagnetic field tensor $F^{\mu\nu}$ satisfies (Craig W., Starko D.) [3]
\[F^{\mu\nu}\to F_{\alpha\beta}g^{\alpha\mu}g^{\beta\nu}\to \lambda^{-2}(x)F_{\alpha\beta}\tilde{g}^{\alpha\mu}\tilde{g}^{\beta\nu}.\]
In addition the volume form transforms as
\[\sqrt{-g}d^nx\to\lambda^{\frac{n}{2}}(x)\sqrt{-\tilde{g}}d^nx.\]
It  follows that the action $S$ is invariant under conformal transformations for $n=4$, implying that in the case of FRW background metrics, the Maxwell propagator satisfies the same properties in an  FRW metric as it does in the underlying product metric. 
\section{Electromagnetic wave equation for $K=0$}
Consider a $\textit{flat}$ FRW space-time  whose line element in conformal time $\tau$ is given as
\[ds^2=a^2(\tau)(-d\tau^2+dx^2+dy^2+dz^2)\]
conformal to the upper half space $\mathbb R_+\times \mathbb R^3=\{(t,x):\tau>0\}.$\\\\
Varying the action we obtain one of Maxwell's equations in FRW  space-time for curvature $K=0$
\begin{equation}
-\frac{1}{4\pi}\partial_\nu\left(g^{\mu\mu}g^{\nu\nu}F_{\mu\nu}\right)=-\frac{1}{4\pi}\partial_\nu\left(F^{\mu\nu}\right)=0.
\end{equation}
Note that since the conformal factor drops out of the action, $g^{\mu\mu}=g^{\nu\nu}=\pm 1$ (the underlying product matrix elements).
The other can be realized by the Bianchi identities. Substituting in the condition
\[F^{\mu\nu}=\partial^\mu A^\nu-\partial^\nu A^\mu\]
where $A_\mu$ is the four potential, we obtain
\begin{equation}
-\frac{1}{4\pi}\partial_\nu\Big(\partial^\mu (g^{\nu\nu}A_\nu)\Big)+\frac{1}{4\pi}\partial_\nu\Big({\partial^\nu}(g^{\mu\mu}A_\mu)\Big)=0.
\end{equation}
We choose the four potential to be in the Coulomb gauge
\[\frac{1}{\sqrt{-g}}\partial_\nu\Big(\sqrt{-g}(g^{\nu\nu}A_\nu)\Big)=\frac{1}{a^4(\tau)}\Bigg(\partial_\nu\Big(a^4(\tau)(g^{\nu\nu}A_\nu)\Big)\Bigg)=0\]
where $\nu=1,2,3,$
\[A_0=0\]
 since this gauge is conformally invariant under conformal transformations of the space time metric with conformal factors of the form $a^2(\tau)$ (Alertz, B. section \RNum{7}) [2]. Upon this choice of gauge we obtain the wave equation for the spatial components of the four potential $A_\mu$ ($\mu=1,2,3$) in \textit{flat} FRW space-time
\begin{equation}
-\frac{1}{4\pi}\partial_\nu\Big({\partial^\nu}(g^{\mu\mu}A_\mu)\Big)=0
\end{equation} 
\subsection{The solution operator}
The $\textit{initial value problem}$ for equation (3.3) takes the form
\begin{equation}
\begin{rcases}
&\partial^2_\tau A^\mu-\Delta A^\mu=0, \quad  \tau>0, x\in \mathbb{R}^3\\
&A^\mu(\tau_0,x)=f^\mu(x)\\
&\partial^\tau A^\mu(\tau_0,x)=g^\mu(x)\\
\end{rcases}
\end{equation}
where $\Delta$  is the ordinary Laplace operator on $\mathbb{R}^3$. We assume $f^\mu(x)\in C^2(\mathbb{R}^3)$ and $g^\mu(x)\in C^1(\mathbb{R}^3)$  along with the condition that $\textit{supp}(f^\mu(x),g^\mu(x))\subseteq B_R(0)$ for some $R>0$ where $B_R(0)=\{x\in\mathbb{R}^3:|x|\leq R\}$. Define the spherical means operator to be 
\[M_f(r,x):=\frac{1}{4\pi r^2}\int_{S_r(x)}f^\mu(y)dS_r(y)\]
whose center lies at $x$ and radius $r$. The spherical means representation is given by Kirchoff's formula
\begin{equation}
A^\mu(\tau,x)=\partial_\tau((\tau-\tau_0)M_f(\tau-\tau_0,x))+(\tau-\tau_0)M_g(\tau-\tau_0,x).
\end{equation}
Carrying out the differentiation, the solution takes the explicit form
\begin{align}
A^\mu(\tau,x)&=\frac{1}{4\pi\left(\tau-\tau_0\right)^2}\bigg(\int_{S_{\tau-\tau_0}(x)}(\tau-\tau_0)g^\mu(y)dS_{\tau-\tau_0}(y)\nonumber\\&\quad+
\int_{S_{\tau-\tau_0}(x)}f^\mu(y)dS_{\tau-\tau_0}(y)\nonumber\\ &\quad+
\int_{S_{\tau-\tau_0}(x)}(\tau-\tau_0)\left(\frac{x-y}{|x-y|}\cdot\nabla\right)f^\mu(y)dS_{\tau-\tau_0}(y)\bigg).
\end{align}
\subsection{Rate of decay}
\begin{thm}
Suppose $f^\mu\in C^2(\mathbb{R}^3)$ and $g^\mu\in C^1(\mathbb{R}^3)$, along with $supp(f^\mu,g^\mu)\subset B_R(0)$. Then the solution days to zero at a rate of $\mathcal{O}(\tau^{-1})$.
\end{thm}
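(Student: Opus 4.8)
The plan is to estimate the solution directly from the explicit Kirchhoff representation (3.6), bounding each of its three surface integrals by the product of a sup-norm of the data and the surface area of the region on which the integrand is supported. Since $f^\mu\in C^2(\mathbb{R}^3)$ and $g^\mu\in C^1(\mathbb{R}^3)$ both have support in $B_R(0)$, the quantities $\|f^\mu\|_\infty$, $\|\nabla f^\mu\|_\infty$ and $\|g^\mu\|_\infty$ are all finite, and the integrand $\frac{x-y}{|x-y|}\cdot\nabla f^\mu(y)$ is bounded pointwise by $\|\nabla f^\mu\|_\infty$ because $\frac{x-y}{|x-y|}$ is a unit vector. Writing $\rho=\tau-\tau_0$, each integrand in (3.6) vanishes off the set $S_\rho(x)\cap B_R(0)$, so every integral is controlled as soon as the surface area of this intersection is controlled uniformly in the center $x$, and the decay statement should be read as a bound on $\sup_{x}|A^\mu(\tau,x)|$ (for fixed $x$ the support condition makes $A^\mu$ vanish once $\rho>|x|+R$).

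The crux is therefore a geometric estimate: I claim $\mathrm{Area}(S_\rho(x)\cap B_R(0))$ stays bounded as $\rho\to\infty$, uniformly over all $x\in\mathbb{R}^3$. To see this I would parametrize $y=x+\rho\omega$ with $\omega\in S^2$ and set $d=|x|$; since $|y|^2=d^2+2\rho\, x\cdot\omega+\rho^2$, the constraint $|y|\le R$ becomes $\cos\theta\le c$ with $c=(R^2-d^2-\rho^2)/(2\rho d)$, where $\theta$ is the angle between $x$ and $\omega$. The admissible region is a spherical cap, and integrating $dS=\rho^2\sin\theta\,d\theta\,d\phi$ over $\theta\in[\arccos c,\pi]$ gives the closed form
\[
\mathrm{Area}\big(S_\rho(x)\cap B_R(0)\big)=2\pi\rho^2(c+1)=\frac{\pi\rho}{d}\big(R^2-(\rho-d)^2\big).
\]
A nonempty intersection forces $\rho-R\le d\le\rho+R$, so $R^2-(\rho-d)^2\le R^2$ and $\rho/d\le\rho/(\rho-R)$, whence the area is at most $\pi R^2\rho/(\rho-R)$, which tends to $\pi R^2$ and is bounded by $2\pi R^2$ for $\rho\ge 2R$. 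Thus the area is dominated by a constant $C_R$ independent of $x$ for all large $\rho$.

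With this in hand the remainder is bookkeeping of powers of $\rho$. The $g^\mu$ term and the gradient term in (3.6) each carry an overall factor $1/(4\pi\rho)$, so they are bounded by $C_R\|g^\mu\|_\infty/(4\pi\rho)$ and $C_R\|\nabla f^\mu\|_\infty/(4\pi\rho)$ respectively, both $\mathcal{O}(\tau^{-1})$; the bare $f^\mu$ term carries $1/(4\pi\rho^2)$ and is thus $\mathcal{O}(\tau^{-2})$. Summing the three contributions and using $\rho=\tau-\tau_0\sim\tau$ as $\tau\to\infty$ yields $\sup_x|A^\mu(\tau,x)|=\mathcal{O}(\tau^{-1})$, the claimed rate. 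I expect the only genuine obstacle to be the geometric area bound: the naive observation that a spherical zone of a radius-$\rho$ sphere can have area of order $\rho$ must be sharpened to the cap formula above, in which the smallness of the solid angle subtended by $B_R(0)$ from a distant center exactly offsets the $\rho^2$ growth of surface measure and leaves the area bounded; once that is secured, everything else reduces to elementary sup-norm estimates.
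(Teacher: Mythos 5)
Your proposal is correct and takes essentially the same route as the paper: you bound each of the three terms in (3.6) by a sup-norm of the data times the surface area of $S_{\tau-\tau_0}(x)\cap B_R(0)$, and the uniform boundedness of that area supplies the $\mathcal{O}(\tau^{-1})$ rate from the explicit powers of $\tau-\tau_0$. The only difference is that the paper simply asserts the key geometric fact (the area is at most $4\pi\min\{(\tau-\tau_0)^2,R^2\}$), whereas you derive it via the explicit spherical-cap formula, which substantiates a step the paper leaves unproved.
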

\begin{proof}
Consider initial data $f^\mu\in C^2(\mathbb{R}^3)$ and $g^\mu\in C^1(\mathbb{R}^3)$, along with $supp(f^\mu,g^\mu)\subseteq B_R(0)$. Define constants
\[C_f:=sup_{x\in\mathbb{R}^3}|(f^\mu(x), \nabla f^\mu(x))|=|f^\mu(x)|_{C^2(B_R)}, \qquad C_g:=sup_{x\in\mathbb{R}^3}|g^\mu(x)|=|g^\mu(x)|_{C^1(B_R)},\]
Consider the third term in expression (3.6).
\[\frac{1}{4\pi\left(\tau-\tau_0\right)^2}\Bigg|\int_{S_{\tau-\tau_0}(x)}(\tau-\tau_0)\left(\frac{x-y}{|x-y|}\cdot\nabla\right)f^\mu(y)dS_{\tau-\tau_0}(y)\bigg)\Bigg|\]
\[\leq\frac{1}{(\tau-\tau_0)^2}\big|(\tau-\tau_0)\big|\frac{C_f}{4\pi}\int_{S_{\tau-\tau_0}(x)\cap B_R(0)}dS_{\tau-\tau_0}(y)\leq\frac{C_fR^2}{\tau-\tau_0}\]
Note that $|\int_{S_{\tau-\tau_0}(x)\cap B_R(0)}dS_{\tau-\tau_0}(y)|$ is bounded by $4\pi$ min$\{(\tau-\tau_0)^2,R^2\}$. Similar estimates hold for the remaining terms in (3.6). The results is  that for $\tau$ large, $|A^\mu(\tau,x)|\leq\mathcal{O}(\tau^{-1})$.
\end{proof}
Transforming back to the original time variable under the transformation $t=\tau^{3}/3$,  the decay rate is obtained as  $\mathcal O(t^{-1/3})$.   
\subsection{The initial value problem at the singular time $\tau_0=0$}
The Maxwell propagator for Maxwell's wave equation, $W(\tau_0,\tau_1)(f_\mu,g_\mu)$, is defined to be the solution operator
\[W(\tau_0,\tau_1)(f^\mu,g^\mu):=(A^\mu(\tau_1,x),\partial_\tau A^\mu(\tau_1,x)), \quad \tau_0>0, \tau_1>0\]
where $A^\mu$ is a solution to Maxwell's wave equation. Again we have worked under the condition that $\tau,\tau_0>0$ due to the space-time singularity at time $\tau=0$ (the image of $t=0$). However, due to the solution being given by such an explicit expression in the form of spherical means, we consider the limit
\[\lim_{\tau_0\to 0^+}W(\tau_0,\tau_1)(f^\mu,g^\mu)\]
where we take the Cauchy hypersurface defined at the initial time $\tau_0$ to zero while keeping $\tau_1$ fixed. We continue to work in the time variable $\tau$ as it is asymptotically the image of $t$ in a neighbourhood around the singularity  $t=0=\tau$. Using the initial data in (3.4) and taking the  limit as $\tau_0\to 0$ of expression (3.5) we obtain
\begin{equation}
A^\mu(\tau,x)=\partial_\tau((\tau M_f)(\tau,x))+\tau M_g(\tau,x)
\end{equation}
and upon carrying out the differentiation
\begin{align}
A^\mu(\tau,x)&=\frac{1}{4\pi\tau^2}\bigg(\int_{S_{\tau}(x)}(\tau)g^\mu(y)dS_{\tau}(y)\nonumber\\&\quad+
\int_{S_{\tau}(x)}f^\mu(y)dS_{\tau}(y)\nonumber\\ &\quad+
\int_{S_{\tau}(x)}(\tau)\left(\frac{x-y}{|x-y|}\cdot\nabla\right)f^\mu(y)dS_{\tau}(y)\bigg).
\end{align}
\begin{thm}
For $(f^\mu,g^\mu)\in C^2(\mathbb  R^3)\times C^1(\mathbb R^3)$ the limit of the wave propagator exists,
\begin{equation}
\lim_{\tau_0\to  0^+}  W(0,\tau)(f^\mu,g^\mu)=(A^\mu(\tau,x),\partial_\tau A^\mu(\tau,x)),
\end{equation}
it depends on $f^\mu(x)$ as well as $g^\mu(x)$, and  satisfies
\begin{equation}
\lim_{\tau\to 0^+} A^\mu(\tau,x)=f^\mu(x),  \quad \lim_{\tau\to 0+} \partial_\tau A^\mu(\tau,x)=g^\mu(x).
\end{equation}
Thus, the expression gives a solution to the wave equation over the full half-line\\ $\tau\in (0,+\infty)$, with initial data $(A^\mu(0,x),\partial_\tau A^\mu(0,x))=(f^\mu(x),g^\mu(x))$ given at the singular time  $\tau=0$.
\end{thm}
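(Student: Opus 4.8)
The plan is to exploit the conformal reduction of Section 2: equation (3.4) is the ordinary constant-coefficient wave equation on $\mathbb{R}_+\times\mathbb{R}^3$, and expression (3.8) is precisely Kirchhoff's formula with initial time set to $\tau_0=0$. I would first establish the limit (3.9). Since the operator $\partial_\tau^2-\Delta$ has constant coefficients it is time-translation invariant, so the full expression (3.5)--(3.6) depends on $\tau_0,\tau$ only through the difference $\tau-\tau_0$. Writing the spherical means in scaled form $M_h(r,x)=\frac{1}{4\pi}\int_{|\omega|=1}h^\mu(x+r\omega)\,d\omega$, one checks that for $f^\mu\in C^2$ and $g^\mu\in C^1$ with compact support the maps $r\mapsto M_f(r,x)$ and $r\mapsto M_g(r,x)$ are $C^1$ up to and including $r=0$, uniformly on compact sets. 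Hence letting $\tau_0\to0^+$ merely substitutes $\tau-\tau_0\mapsto\tau$ in each term, the limit exists, and equals (3.7)--(3.8); the dependence on both $f^\mu$ and $g^\mu$ is visible from the three surviving integrals.

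Next I would verify that $A^\mu$ in (3.8) solves the wave equation for every $\tau>0$. Rewriting $A^\mu=\partial_\tau(\tau M_f)+\tau M_g$, I would invoke the Euler--Poisson--Darboux equation $(\partial_r^2+\frac{2}{r}\partial_r)M_h=\Delta_x M_h$ obeyed by the spherical means. Multiplying by $r$ and regrouping gives $\partial_r^2(rM_h)=\Delta_x(rM_h)$, so that $v_h:=\tau M_h$ solves the one-dimensional-in-$\tau$ wave equation $\partial_\tau^2 v_h=\Delta_x v_h$. Because $\partial_\tau^2-\Delta$ commutes with $\partial_\tau$, both $\tau M_g$ and $\partial_\tau(\tau M_f)$ satisfy (3.4), and therefore so does their sum $A^\mu$.

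To recover the data I would use that $M_h(\tau,x)\to h^\mu(x)$ as $\tau\to0^+$, while $\partial_\tau M_h(\tau,x)=\frac{1}{4\pi}\int_{|\omega|=1}\omega\cdot\nabla h^\mu(x+\tau\omega)\,d\omega$ remains bounded by $\|\nabla h^\mu\|_\infty$. From $A^\mu=M_f+\tau\partial_\tau M_f+\tau M_g$ it follows at once that $A^\mu(\tau,x)\to f^\mu(x)$. For the velocity, the same EPD identity collapses $\partial_\tau^2(\tau M_f)=\tau\Delta_x M_f\to0$, whereas $\partial_\tau(\tau M_g)=M_g+\tau\partial_\tau M_g\to g^\mu(x)$; adding these yields $\partial_\tau A^\mu(\tau,x)\to g^\mu(x)$. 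These are the two relations (3.11).

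The calculations are classical; the only point requiring care---and the conceptual core of the theorem---is the behaviour at $\tau=0$. Boundedness of the first $\tau$-derivatives of the spherical means as the radius shrinks to zero is exactly where the $C^2\times C^1$ regularity and compact support enter, making the limits uniform and permitting the Cauchy data to be attained continuously from the singular time; I expect this to be the main, albeit mild, obstacle. The deeper message, following Section 2, is that the degenerate conformal factor $a(\tau)=\tau^2$ at the big-bang time $t=0$ has been scaled out, so in conformal coordinates $\tau=0$ is a regular initial hypersurface for the flat Maxwell wave equation: there is no genuine analytic obstruction, and the singularity is revealed to be a conformal artifact.
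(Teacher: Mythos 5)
Your proposal is correct and follows the same overall scheme as the paper: pass to the limit $\tau_0\to 0^+$ in the Kirchhoff formula (3.5) to obtain (3.7)--(3.8), then differentiate and take $\tau\to 0^+$ to recover the Cauchy data. The one genuine difference is the mechanism used to dispose of the derivative terms in $\partial_\tau A^\mu=2\partial_\tau M_f+\tau\partial^2_\tau M_f+M_g+\tau\partial_\tau M_g$. The paper argues by parity: $M_f$ and $M_g$ are even in $\tau$, so $\partial_\tau M_f\to 0$, with the boundedness of $\tau\partial^2_\tau M_f$ left implicit. You instead invoke the Euler--Poisson--Darboux identity to collapse $2\partial_\tau M_f+\tau\partial^2_\tau M_f=\partial^2_\tau(\tau M_f)=\tau\Delta_x M_f=\tau M_{\Delta f}\to 0$, which handles both terms in a single stroke and makes explicit exactly where the hypothesis $f^\mu\in C^2$ enters (boundedness of $M_{\Delta f}$); this is arguably tidier than the parity argument. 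You also verify that (3.8) actually solves the wave equation for $\tau>0$, a point the paper simply takes as classical; note only that your commuting step, applying $\partial_\tau$ to $\partial^2_\tau(\tau M_f)=\Delta_x(\tau M_f)$, formally requires third derivatives of $M_f$, i.e.\ $f^\mu\in C^3$, to yield a classical $C^2$ solution --- with $f^\mu\in C^2$ alone the equation holds in a weaker sense, which is consistent with the loss of regularity the paper itself flags in its concluding remarks, so this is a caveat rather than a gap relative to the paper's own standard of rigor.
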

\begin{proof}
Consider expression (3.7). We find that
\[\lim_{\tau\to 0^+}A^\mu(\tau,x)=\lim_{\tau\to 0^+}M_{f}(\tau,x)=f^\mu(x)\]
for continuous initial data $f^\mu(x)$. In addition, after differentiating, it can be expressed as
\[\partial_\tau A^\mu(\tau,x)=2\partial_\tau M_f(\tau,x)+\tau\partial^2_\tau M_f(\tau,x)+M_g(\tau,x)+\tau\partial_\tau M_g(\tau,x)\]
from which we obtain as a limit
\[\lim_{\tau\to 0^+}\partial_\tau A^\mu(\tau,x)=\lim_{\tau\to 0^+}M_g(\tau,x)=g^\mu(x)\]
where the terms involving partial derivatives vanish since $M_f$ and $M_g$ are even in $\tau$.
\end{proof}
This calculation shows that the Cauchy problem is indeed well posed for $\tau_0=0$, where solutions of the Cauchy problem posed  at $\tau_0>0$ and propagated to times $0<\tau<\tau_0$ will attain there initial data on the Cauchy hypersurface as $\tau\to 0^+$. The solution itself exists for the initial value problem consisting of initial data $(f^\mu(x),g^\mu(x))$ posed at $\tau_0$, and propagated to future (past) times $\tau$.

\section{Electromagnetic wave equation $K=-1$}
Consider a $\textit{hyperbolic}$ FRW space-time with line element in conformal time $\tau$ given as
\[ds^2=a^2(\tau)\left(-d\tau^2+\frac{dx^2+dy^2+dz^2}{z^2}\right)\]
conformal to the upper half space $\mathbb R_+\times \mathbb R^3=\{(t,x):\tau>0\}.$
Varying  the action in this metric, we obtain as one of Maxwell's equations in FRW space-time for curvature $K=-1$
\begin{equation}
-\frac{1}{4\pi}z^3\partial_\nu\left(z^{-3}g^{\mu\mu}g^{\nu\nu}F_{\mu\nu}\right)=-\frac{1}{4\pi}z^3\partial_\nu\left(z^{-3}F^{\mu\nu}\right)=0.
\end{equation}
The other can once again be realized by the Bianchi identities. Substituting in the condition
\[F^{\mu\nu}=\partial^\mu A^\nu-\partial^\nu A^\mu\]
we obtain
\begin{equation}
-\frac{1}{4\pi}z^3\partial_\nu\Big(z^{-3}\partial^\mu (g^{\nu\nu}A_\nu)\Big)+\frac{1}{4\pi}z^3\partial_\nu\Big(z^{-3}{\partial^\nu}(g^{\mu\mu}A_\mu)\Big)=0.
\end{equation}
In order to obtain the wave equation in the case of curvature $K=-1$,  we once again impose the Coulomb gauge
\[\frac{1}{\sqrt{-g}}\partial_\nu\Big(\sqrt{-g}(g^{\nu\nu}A_\nu)\Big)=\frac{1}{a^4(\tau)z^{-3}}\Bigg(\partial_\nu\Big(a^4(\tau)z^{-3}(g^{\nu\nu}A_\nu)\Big )\Bigg)=0\]
where $\nu=1,2,3,$ 
\[A_0=0\]
as this gauge is conformally invariant under conformal transformations of the spacetime metric with conformal factors of the form $a^2(\tau)$ (Alertz, B. section \RNum{7}) [2]. Imposing these conditions, we obtain the wave equation for the spatial components of the four potential $A_\mu$ ($\mu=1,2,3$) in \textit{hyperbolic} FRW space-time
\begin{equation}
\frac{1}{4\pi}z^3\partial_\nu\Big(z^{-3}{\partial^\nu}(g^{\mu\mu}A_\mu)\Big)=0.
\end{equation}
Here we note that $g^{00}=-1$ and $g^{11}=g^{22}=g^{33}=z^{2}$.
\subsection{The solution operator}
The $\textit{initial value problem}$ for the four potential takes the form
\begin{equation}
\begin{rcases}
&\partial^2_\tau A^\mu-\Delta_\sigma A^\mu=0, \quad  \tau>0, x\in \mathbb{H}^3\\
&A_\mu(\tau_0,x)=f^\mu(x)\\
&\partial_\tau A^\mu(\tau_0,x)=g^\mu(x)\\
\end{rcases}
\end{equation}
where $\Delta_\sigma$  is the Laplace-Beltrami operator on $\mathbb{H}^3$. We assume $f^\mu(x)\in C^2(\mathbb{H}^3)$ and $g^\mu(x)\in C^1(\mathbb{H}^3)$  along with the condition that $\textit{supp}(f^\mu(x),g^\mu(x))\subseteq B_R(x_0)$ for some $R>0$ where $B_R(x_0)=\{x\in\mathbb{H}^3:|x|\leq R\}$. In addition we require that $B_R(x_0)\cap (x,y,0)=\emptyset$ in accordance with Maxwell's equations for the electric and magnetic fields. This system has an explicit spherical means formula for the solution given in (Klainerman S., Sarnak P.) [8], which is the hyperbolic analogue of the spherical mean expression given previously, namely
\begin{equation}
A^\mu(\tau,x)=\partial_\tau \left(\sinh(\tau-\tau_0)M_f(\tau-\tau_0,x)\right)+\sinh(\tau-\tau_0)M_g(\tau-\tau_0,x)
\end{equation}
where the geodesic spherical mean of a vector  $f^\mu(x)$  is given  by  the integral over the geodesic sphere $S_\tau(x)$ of radius $r$ centred at $x$
\[M_f(r,x):=\frac{1}{4\pi\left(\sinh(r)\right)^2}\int_{S_r(x)}f^\mu(y)dS_r(y)\]
where $dS_r(y)$ is the spherical surface area. We thus obtain as our solution formula via spherical means on hyperbolic FRW space-times;
\begin{align}
A^\mu(\tau,x)&=\frac{1}{4\pi\left(\sinh(\tau-\tau_0)\right)^2}\bigg(\int_{S_{\tau-\tau_0}(x)}\sinh(\tau-\tau_0)g^\mu(y)dS_{\tau-\tau_0}(y)\nonumber\\&\quad+
\int_{S_{\tau-\tau_0}(x)}\cosh(\tau-\tau_0)f^\mu(y)dS_{\tau-\tau_0}(y)\nonumber\\ &\quad+
\int_{S_{\tau-\tau_0}(x)}\sinh(\tau-\tau_0)\left(\frac{x-y}{|x-y|}\cdot\nabla\right)f^\mu(y)dS_{\tau-\tau_0}(y)\bigg).
\end{align}
\subsection{Rate of decay}
\begin{thm}
Suppose $f^\mu\in C^2(\mathbb{H}^3)$ and $g^\mu\in C^1(\mathbb{H}^3)$, along with $supp(f^\mu,g^\mu)\subset B_R(x_0)$. Then the solution decays to zero at a rate of $\mathcal{O}(e^{-\tau})$.
\end{thm}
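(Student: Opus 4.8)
The plan is to follow the template of Theorem 3.1, estimating the three terms of the spherical-means formula (4.6) by the sup-norms of the data against the surface measure of the set on which the integrands are supported. First I would introduce
\[
C_f := \sup_{x\in\mathbb{H}^3}\big|(f^\mu(x),\nabla f^\mu(x))\big|, \qquad C_g := \sup_{x\in\mathbb{H}^3}|g^\mu(x)|,
\]
which are finite by the $C^2\times C^1$ regularity and compact support. Writing $r=\tau-\tau_0$ and bounding each integrand in (4.6) by $C_f$ or $C_g$, the formula gives
\[
|A^\mu(\tau,x)| \le \frac{1}{4\pi(\sinh r)^2}\big[(C_g+C_f)\sinh r + C_f\cosh r\big]\,\mathrm{Area}\big(S_r(x)\cap B_R(x_0)\big),
\]
so that everything reduces to controlling the area of intersection.

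The decisive step, and the one genuinely different from the flat case, is a uniform bound on $\mathrm{Area}(S_r(x)\cap B_R(x_0))$. In $\mathbb{H}^3$ the area element of a geodesic sphere of radius $r$ is $(\sinh r)^2\,d\omega$, so the total area grows like $e^{2r}$; a naive estimate would therefore fail. The resolution is that the solid angle subtended at $x$ by the fixed ball $B_R(x_0)$ collapses at exactly the compensating rate, and I would make this quantitative with the hyperbolic law of cosines. Setting $d=d(x,x_0)$, a point $y\in S_r(x)$ in a direction making angle $\theta$ with the geodesic from $x$ to $x_0$ satisfies
\[
\cosh d(y,x_0) = \cosh r\,\cosh d - \sinh r\,\sinh d\,\cos\theta,
\]
so the constraint $d(y,x_0)\le R$ carves out a spherical cap of solid angle $2\pi(1-\cos\theta_{\max})$ with
\[
1-\cos\theta_{\max} = \frac{\cosh R - \cosh(r-d)}{\sinh r\,\sinh d}.
\]
Multiplying by $(\sinh r)^2$ and using that the intersection is nonempty only when $|r-d|\le R$ — which in the regime $r\to\infty$ forces $d\to\infty$, so that $\sinh r/\sinh d$ is bounded and $\cosh R-\cosh(r-d)\le\cosh R$ — I obtain $\mathrm{Area}(S_r(x)\cap B_R(x_0))\le C(R)$ uniformly in $x$ and $r$. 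This is the hyperbolic analogue of the bound $4\pi\min\{(\tau-\tau_0)^2,R^2\}$ used in Theorem 3.1.

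Inserting the area bound $C(R)$ into the estimate above yields
\[
|A^\mu(\tau,x)| \le \frac{C(R)}{4\pi}\,\frac{(C_g+C_f)\sinh r + C_f\cosh r}{(\sinh r)^2},
\]
and since both $\sinh r/(\sinh r)^2$ and $\cosh r/(\sinh r)^2$ are $\mathcal{O}(e^{-r})$ as $r\to\infty$, we conclude $|A^\mu(\tau,x)| = \mathcal{O}(e^{-(\tau-\tau_0)}) = \mathcal{O}(e^{-\tau})$, the constant $e^{\tau_0}$ being absorbed. I expect the main obstacle to be precisely the geometric lemma: one must check that the exponential growth $(\sinh r)^2$ of the sphere area is exactly cancelled — not merely dominated — by the exponentially small solid angle of the support ball, so that the leftover balance $\cosh r/(\sinh r)^2$ delivers true exponential decay rather than a nonvanishing constant.
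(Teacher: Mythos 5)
Your proposal is correct and follows essentially the same route as the paper: the same term-by-term sup-norm estimates of the three integrals in (4.6), reduced to a uniform bound on the area of $S_{\tau-\tau_0}(x)\cap B_R(x_0)$, with the decay then read off from the $\cosh(\tau-\tau_0)/\sinh^2(\tau-\tau_0)$ and $1/\sinh(\tau-\tau_0)$ balances. The only difference is that the paper merely asserts the intersection-area bound $4\pi\min\{\sinh^2(\tau-\tau_0),\sinh^2(R)\}$, while you prove it via the hyperbolic law of cosines --- a filled-in detail, not a different approach.
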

\begin{proof}
Consider initial data $f^\mu\in C^2(\mathbb{H}^3)$ and $g^\mu\in C^1(\mathbb{H}^3)$, along with $supp(f^\mu,g^\mu)\subseteq B_R(x_0)$. Define constants
\[C_f:=sup_{x\in\mathbb{H}^3}|(f^\mu(x), \nabla f^\mu(x))|=|f^\mu(x)|_{C^2(B_R)}, \qquad C_g:=sup_{x\in\mathbb{H}^3}|g^\mu(x)|=|g^\mu(x)|_{C^1(B_R)},\]
Consider the third term in expression (4.6).
\[\frac{1}{4\pi\sinh(\tau-\tau_0)}\Bigg|\int_{S_{\tau-\tau_0}(x)}\left(\frac{x-y}{|x-y|}\cdot\nabla\right)f^\mu(y)dS_{\tau-\tau_0}(y)\bigg)\Bigg|\]
\[\leq\frac{1}{4\pi\sinh(\tau-\tau_0)}C_f\int_{S_{\tau-\tau_0}(x)\cap B_R(x_0)}dS_{\tau-\tau_0}(y)\leq\frac{C_f\sinh^2(R)}{\sinh(\tau-\tau_0)}\]
Note that $|\int_{S_{\tau-\tau_0}(x)\cap B_R(x_0)}dS_{\tau-\tau_0}(y)|$ is bounded by $4\pi$ min$\{\sinh^2(\tau-\tau_0),\sinh^2(R)\}$. Similar estimates hold for the other terms in (4.6). The result is that for large $\tau$, $|A^\mu(\tau,x)|\leq\mathcal{O}(e^{-\tau})$.
\end{proof}
Returning to the original time variable defined by the transformation $t=\sinh(\tau)-\tau$, the decay rate obtained from the asymptotics of the expression is given as $\mathcal{O}(t^{-1})$.
\subsection{The initial value problem at the singular time $\tau_0=0$}
We proceed  similarly as was done  for Maxwell's wave equation in FRW space-times with flat spatial geometry. Consider the Maxwell propagator $W(\tau_0,\tau_1)(f^\mu, g^\mu)$ defined to be
\[W(\tau_0,\tau_1)(f^\mu, g^\mu):=(A^\mu(\tau_1,x),\partial_\tau A^\mu(\tau_1,x)), \qquad \tau_0>0,  \tau_1>0\]
where $A^\mu$ is  a solution to Maxwell's wave equation. As in the case of spatially flat FRW space-times, we worked solely under the condition that  $\tau, \tau_0>0$ due to the space-time singularity at time $\tau=0$. However,  given the explicit nature of the wave propagator on $\mathbb{H}^3$ we can consider the limit
\[\lim_{\tau_0\to 0^{+}}W(\tau_0,\tau_1)(f^\mu,g^\mu)\]
where as previously we work in the time variable $\tau$, taking the Cauchy hypersurface defined at the initial time $\tau_0$ to zero while keeping $\tau_1$ fixed. From  this we obtain
\begin{equation}
A^\mu(\tau,x)=\partial_\tau\left(\sinh(\tau)M_f(\tau,x)\right)+\sinh(\tau)M_g(\tau,x)
\end{equation}
which has the explicit form
\begin{align}
A^\mu(\tau,x)&=\frac{1}{4\pi\left(\sinh(\tau)\right)^2}\bigg(\int_{S_{\tau}(x)}\sinh(\tau)g^\mu(y)dS_{\tau}(y)\nonumber\\&\quad+
\int_{S_{\tau}(x)}\cosh(\tau)f^\mu(y)dS_{\tau}(y)\nonumber\\ &\quad+
\int_{S_{\tau}(x)}\sinh(\tau)\left(\frac{x-y}{|x-y|}\cdot\nabla\right)f^\mu(y)dS_{\tau}(y)\bigg).
\end{align}

\begin{thm}
For $(f^\mu(x),g^\mu(x))\in C^2(\mathbb  H^3)\times C^1(\mathbb H^3)$ the limit of the wave propagator exists,
\begin{equation}
\lim_{\tau_0\to  0^+}  W(0,\tau)(f^\mu,g^\mu)=(A^\mu(\tau,x),\partial_\tau A^\mu(\tau,x)),
\end{equation}
it depends on $f^\mu(x)$ as well as $g^\mu(x)$, and  satisfies
\begin{equation}
\lim_{\tau\to 0^+} A^\mu(\tau,x)=f^\mu(x),  \quad \lim_{\tau\to 0+} \partial_\tau A^\mu(\tau,x)=g^\mu(x).
\end{equation}
Thus, the expression gives a solution to the wave equation over the full half-line\\ $\tau\in (0,+\infty)$, with initial data $(A^\mu(0,x),\partial_\tau A^\mu(0,x))=(f^\mu(x),g^\mu(x))$ given at the singular time  $\tau=0$.
\end{thm}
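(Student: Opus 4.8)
The plan is to mirror the argument given for Theorem 3.2 in the spatially flat case, now starting from the hyperbolic solution formula. First I would settle existence of the limiting propagator. In the explicit formula (4.6) the coefficients $\sinh(\tau-\tau_0)$, $\cosh(\tau-\tau_0)$ and the prefactor $1/(4\pi\sinh^2(\tau-\tau_0))$ all depend continuously on $\tau_0$, and for fixed $\tau>0$ the geodesic sphere $S_{\tau-\tau_0}(x)$ varies continuously while the integrands stay bounded, using $f^\mu\in C^2(\mathbb{H}^3)$, $g^\mu\in C^1(\mathbb{H}^3)$ with compact support. Hence each of the three surface integrals converges as $\tau_0\to 0^+$ to its counterpart in (4.8), so the limit (4.9) exists and is represented by (4.8), which manifestly involves both $f^\mu$ and $g^\mu$.

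Next I would check that this limiting expression attains the prescribed data at $\tau=0$, i.e.\ the two identities in (4.10). Expanding (4.7) by the product rule gives
\[A^\mu(\tau,x)=\cosh(\tau)M_f(\tau,x)+\sinh(\tau)\partial_\tau M_f(\tau,x)+\sinh(\tau)M_g(\tau,x).\]
Since the geodesic spherical mean satisfies $M_f(\tau,x)\to f^\mu(x)$ as $\tau\to 0^+$ for continuous $f^\mu$ --- the geodesic sphere of radius $\tau$ has area $4\pi\sinh^2(\tau)$, so the normalized average tends to the value at the centre --- and since $\cosh(0)=1$, $\sinh(0)=0$, the first identity in (4.10) follows at once.

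For the velocity I would differentiate the displayed expression to obtain
\[\partial_\tau A^\mu=\sinh(\tau)M_f+2\cosh(\tau)\partial_\tau M_f+\sinh(\tau)\partial^2_\tau M_f+\cosh(\tau)M_g+\sinh(\tau)\partial_\tau M_g.\]
Every term carrying a factor $\sinh(\tau)$ vanishes as $\tau\to 0^+$, and $\cosh(0)=1$, so the limit reduces to $2\partial_\tau M_f(0,x)+M_g(0,x)$. Since $M_g(0,x)=g^\mu(x)$, the second identity in (4.10) comes down to the claim that $\partial_\tau M_f(0,x)=0$, i.e.\ that the hyperbolic geodesic spherical mean is even in the radial variable $\tau$. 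This is the key structural point and the step I expect to be the main obstacle: unlike the Euclidean mean, where evenness is immediate from the symmetric definition, here one must argue that $M_f(\tau,x)$ extends to a smooth even function of $\tau$ near $0$. I would obtain this either from the reflection $S_{-\tau}(x)=S_{\tau}(x)$ together with the evenness of $\sinh^2(\tau)$, or, more robustly, from the Darboux-type (Euler--Poisson--Darboux) second-order equation satisfied by $M_f(\tau,x)$ in the radial variable, whose solution with Cauchy data $(M_f(0,x),\partial_\tau M_f(0,x))=(f^\mu(x),0)$ is necessarily even. Granting this, $\partial_\tau M_f(0,x)=0$ and therefore $\lim_{\tau\to 0^+}\partial_\tau A^\mu(\tau,x)=M_g(0,x)=g^\mu(x)$. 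Since $A^\mu$ solves the wave equation for each $\tau_0>0$ and (4.8) is its continuous limit, the resulting expression provides a solution on the whole half-line $(0,+\infty)$ assuming the data $(f^\mu,g^\mu)$ at the singular time $\tau=0$.
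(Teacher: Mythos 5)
Your proposal follows essentially the same route as the paper's own proof: take the $\tau_0\to 0^+$ limit to obtain (4.7), expand by the product rule, differentiate to get the identical five-term expression for $\partial_\tau A^\mu$, and conclude via the evenness of the geodesic spherical means in $\tau$ so that $\partial_\tau M_f(0,x)=0$. The only difference is that you supply justifications (continuity of the formula in $\tau_0$, and the reflection or Euler--Poisson--Darboux argument for evenness) that the paper simply asserts, which strengthens rather than changes the argument.
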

\begin{proof}
Consider expression (4.7) obtained by taking the limit as $\tau_0\to 0$. We find that
\[\lim_{\tau\to 0^+}A^\mu(\tau,x)=\lim_{\tau\to 0^+}M_{f}(\tau,x)=f^\mu(x)\]
for continuous initial data $f_\mu(x)$. In addition, after differentiating we can express it as
\[\partial_\tau A^\mu(\tau,x)=\sinh(\tau)M_f+2\cosh(\tau)\partial_\tau M_f+\sinh(\tau)\partial^2_\tau M_f+\cosh(\tau)M_g+\sinh\partial_\tau M_g\]
from which we obtain as a limit
\[\lim_{\tau\to 0^+}\partial_\tau A^\mu(\tau,x)=\lim_{\tau\to 0^+}M_g(\tau,x)=g^\mu(x)\]
where the terms involving partial derivatives vanish since $M_f$ and $M_g$ are even in $\tau$.
\end{proof}
Similar  to the case $K=0$, we see that the Cauchy problem is indeed well posed for $\tau_0=0$. The solution exists for the initial value problem consisting of initial data $(f^\mu(x),g^\mu(x))$ posed at $\tau_0$, and propagated to future (past) times $\tau$.
\section{Concluding Remarks}
The calculations in this paper give significance  to the Maxwell propagator applied  to data $(f^\mu(x),g^\mu(x))$ posed at $\tau=0$ for curvature $K=0$ as  well as $K=-1$. It is interesting that the Cauchy problem for the electromagnetic wave equation in these space-times is indeed well posed for $\tau_0=0$, where solutions exist for initial data given by vector like functions $(f^\mu(x),g^\mu(x))$ posed at $\tau_0$ and propagated to future (past) times $\tau$. This is in stark contrast to the scalar wave equation in which the full Cauchy problem for both cases of curvature is not well posed for $\tau_0=0$. Solutions of the general Cauchy problem posed at $\tau_0>0$ and propagated to times $0<\tau<\tau_0$ become singular as $\tau\to 0^+$ [1]. However, there remains a full function space of initial data, depending upon one scalar function $g(x)$, for which the solution exists for the initial value problem consisting of data $(g(x),0)$ posed at $\tau_0$,  and propagated to future (past) times $\tau$. The result is a  well defined solution for $\tau>0$ emanating from the space-time singularity  at $\tau_0=t_0=0$. Under reflection $\tau\to-\tau$, the expression is also a solution to the wave equation on FRW space-times for $\tau<0$. We thus obtain a global solution whose evolution is continuous across the space-time singularity at $\tau=0$ (the image of $t=0$ as the transformed time variable  in both cases $K=0$ and $K=-1$ behaves asymptotically as $t\sim\tau^3/3$ for $\tau\to 0$ [1]). 
This is  interesting as the result  shows that light can propagate continuously through a  big bang singularity of the type in our universe and is analogous to that obtained by Abbasi and Craig [1] for the scalar wave equation. Note however, that there is a $\textit{possible}$ loss of regularity of the solution. Namely, if $f_\mu\in C^2$ and $g_\mu\in C^1$ initially, then the solution expression guarantee's only that  $f_\mu\in C^1$ and $g_\mu\in  C$ at  a later time. This can be attributed to focussing effects present when $n>1$ (John F.) [7].\\\\
Secondly, given the explicit nature of the solution one can $\textit{read off}$ the decay rate. For the case of curvature $K=0$ the time  variable $t$ is related to the conformal time variable $\tau$ by $t=\tau^3/3$. It follows that the solution decays as $\mathcal{O}(t^{-1/3})$. This is in contrast to the decay rate obtained for the scalar wave equation on FRW space-times for curvature $K=0$, of which was found to be $\mathcal  O(t^{-1})$ (identical to that of Minkowski space-time). The results is that for $K=0$ the electromagnetic wave equation (being a vector wave equation) obeys a slower decay rate than that of its scalar counterpart. For the case of curvature $K=-1$ there is not a clean inversion formulae as the time variable $t$ is related to the conformal time variable $\tau$ by the relation $t=\sinh(\tau)-\tau$. In order to determine the decay rate one must look at the asymptotic's of the solution. Doing  so, the solution decays as $\mathcal{O}(t^{-1})$ as  the solution expression is bounded  for large  $\tau$  by $e^{-\tau}$. This decay rate is once again found to be slower than that of the scalar wave equation, which was calculated to be $\mathcal{O}(t^{-2})$ [1]. It is interesting to note that the decay rate for the electromagnetic wave equation in FRW space-times of curvature $K=-1$ is identical to that of the electromagnetic wave equation in Minkowski space-time and to that of the scalar wave equation in FRW space-times of curvature $K=0$.\\\\
Finally, noting that both Huygens' principle of finite propagation speed as well as Huygens' sharp principle (that solutions lie on the boundary of the light cone) are satisfied in the underlying product metrics considered, it follows that both principles are satisfied on the FRW space-times considered here [6]. Conversely, although Huygens' principle of finite propagation speed is indeed satisfied for the scalar wave equation on these space-times, Huygens sharp principle is not. Rather, in the case of $K=0$ the solution at a point is seen to persist indefinitely, spatially constant with value related to the average value of the initial data and with asymptotically diminishing magnitude in time. In the case of $K=-1$ the solution in the interior of the light cone is not locally spatially constant, but is dictated by a kernel which is dependent upon the geodesic radius $r$ [1].

\section*{Acknowledgments}
This work is in part supported by the James Stewart Research Award. It is a pleasure to thank Dr. M. Wang and  Dr. D. Pelinovsky for their time and commitment in helping complete my senior thesis project, of which is distilled into this paper. I'd also like to thank Dr. N. Kamran whose comments and guidance were indispensable in making this paper happen. A special thank you to my supervisor and mentor Dr. W. Craig who sadly passed away in January 2019. His  ideas and knowledge live on in the many students and colleagues fortunate enough to have experienced him.

\end{document}